\newcommand{\real}{{\mathbb{R}}}
\newcommand{\integers}{\mathbb{Z}}
\newcommand\redsout{\bgroup\markoverwith{\textcolor{red}{\rule[0ex]{2pt}{5pt}}}\ULon}
\newtheorem{theorem}{Theorem}
\newtheorem{definition}{Definition}
\newtheorem{assumption}{Assumption}
\newtheorem{lemma}{Lemma}
\newtheorem{corollary}{Corollary}
\newtheorem{proposition}{Proposition}
\definecolor{darkgreen}{rgb}{0,0.5,0}
\newcommand{\oprocendsymbol}{\hbox{$\bullet$}}
\newcommand{\oprocend}{\relax\ifmmode\else\unskip\hfill\fi\oprocendsymbol}
\newcommand{\eps}{\epsilon}
\newcommand{\map}[3]{#1:#2 \rightarrow #3}
\newcommand\upscr[2]{#1^{\textup{#2}}}
\newcommand{\dout}{\upscr{d}{}}
\newcommand{\doutp}{\upscr{\bar{d}}{}}
\newcommand{\Nin}{\upscr{N}{in}}
\newcommand{\Ninp}{\upscr{\bar{N}}{in}}
\newcommand{\Nout}{\upscr{N}{out}}
\renewcommand{\S}{\mathcal{S}}
\newcommand{\G}{G}
\newcommand{\Gp}{\bar{G}}
\newcommand{\B}{\mathcal{B}}
\newcommand{\A}{\mathbb{A}}
\newcommand{\z}{\mathbf{z}}
\DeclareMathOperator*{\argmin}{arg\,min}
\newcommand{\x}{\mathbf{x}}
\newcommand{\w}{\mathbf{w}}
\newcommand{\g}{\mathbf{g}}
\renewcommand{\Pr}{\mathbb{P}}
\newcommand{\Es}{\mathcal{E}}
\newcommand{\Esp}{\mathcal{\bar{E}}}
\newcommand{\Ex}{\mathbb{E}}
\newcommand{\V}{\mathcal{V}}
\newcommand{\Q}{\mathbb{Q}}
\definecolor{BBlue}{cmyk}{.98,0.10,0,.25}
\title{Rate of Convergence for Distributed Optimization with Uncertain Communications\footnote{This note is primarily written as a research announcement on arXiv; a complete version, combined with an extended version of~\cite{PR-BG-TL-BT:18-necsys}, will appear elsewhere.}}
\author[1]{Pouya Rezaeinia}   
\author[2]{Bahman Gharesifard}      
\affil[1]{Sauder School of Business, University of British Columbia}
\affil[2]{Department of Mathematics and Statistics, Queen's University}
\begin{document}
	
\maketitle

		\begin{abstract}                          
			We consider the distributed optimization problem for the sum of convex functions where the underlying communications network connecting agents at each time is drawn at random from a collection of directed graphs. Building on an earlier work~\cite{PR-BG-TL-BT:18-necsys}, where a modified version of the subgradient-push algorithm is shown to be almost surely convergent to an optimizer on  sequences of random directed graphs, we find an upper bound of the order of $\sim O(\frac{1}{\sqrt{t}})$ on the convergence rate of our proposed algorithm, establishing the first convergence bound in such random settings.
		\end{abstract}

	\section{Introduction}
	Distributed optimization of a sum of convex functions is concerned with solving the following problem: a network of nodes $\V = \{v_1,v_2,...,v_n\}$, each having access to a private local convex function ${f_i: \real^d \rightarrow \real}$, aim to solve the minimization problem 
	\begin{align}\label{minimization}
	\text{minimize } F(\z) \triangleq \sum_{i=1}^{n} f_i(\z), \quad \z\in \real^d,
	\end{align}
	in a \emph{distributed manner}, i.e., only by exchanging limited information on their estimate of the optimizer. We consider a communication layer between the nodes and at each time $ t \geq 0 $, nodes can exchange information about their \emph{state}, that is their local estimate of a solution of~\eqref{minimization}. We are seeking for an algorithm where each node utilizes the private local convex function along with the estimates received from its neighbours to solve~\eqref{minimization}. The importance of the problem stems from its applications in a variety of contexts
	, ranging from sensor localization~\cite{JWD-AF-FB:10} and distributed electricity generation and smart grids~\cite{ADDG-STC-CNH:12}, \cite{LG-UT-SHL:12} to statistical learning~\cite{AN-AO-CAU:17-bayesian}.

	The main purpose of this note is to complete a missing part of an earlier work~\cite{PR-BG-TL-BT:18-necsys}, by providing some convergence rate results, and for this reason, we avoid surveying the related literature in details. A complete and combined version of this work along with~\cite{PR-BG-TL-BT:18-necsys} will appear elsewhere. 
		

	\subsection{Mathematical Preliminaries}\label{sec:prelim}
	Let $ \real $ denote the set of real numbers, and let $ \real_{\geq 0} $ and $ \integers_{\geq 0} $ denote the set of non-negative real numbers and integers, respectively. For a set $\A$, we write $S\subset \A$ if $S$ is a proper subset of $\A$, and we call the empty set and $\A$ trivial subsets of $\A$. The complement of $S$ is denoted by $S^c$. Let $ |S| $ denote the cardinality of a finite set $ S $. We view all vectors in $\real^n$ as column vectors, where $n$ is a positive integer. We denote by $ \|\cdot\| $ and $ \|\cdot\|_1 $, the standard Euclidean norm and the $1$-norm on $ \real^n $, respectively. The notation $A'$ and $v'$ will refer to the transpose of the matrix $A$ and the vector $v$, respectively. We use $ \real_{\geq 0}^{n\times n}$ to denote the set of $n\times n$ non-negative real-valued matrices. A matrix $A\in \real_{\geq 0}^{n\times n}$ is column-stochastic if each of its columns sums to 1; row-stochastic matrices are defined similarly, and when both conditions are satisfied, we refer to $ A $ as doubly stochastic. For a given $A\in \real_{\geq 0}^{n\times n}$ and any nontrivial $S\subset [n]\triangleq \{1,,\ldots,n\}$, we define ${A_{SS^c} \triangleq \sum_{i\in S, j\in S^c}A_{ij}}$. 
	\subsubsection{Graph theory}
	A (weighted) \emph{directed graph} $\G\triangleq (\V,\Es,W) $ consists of a node set $\V \triangleq \{v_1,v_2,\ldots,v_n\}$, an edge set $ {\Es \subseteq \V\times \V}$, and a weighted \emph{adjacency matrix} $ {W \in \real^{n\times n}_{\geq0}} $, with ${ W_{ji}>0 }$ if and only if $ (v_i,v_j)\in \Es $, in which case we say that $v_i$ is connected to $v_j$. Similarly, given a matrix $ W \in \real^{n\times n}_{\geq0} $, one can associate to $ W $ a directed graph $ \G=(\V,\Es) $, where $ (v_i,v_j)\in \Es $ if and only if $ {W_{ji}>0 }$, and hence $ W $  is the corresponding weighted adjacency matrix for $ \G $. The in-neighbors and the out-neighbors of $v_i$ are the sets of nodes ${\Nin_i= \{j\in [n]: W_{ij}>0\}}$ and ${\Nout_i= \{j\in [n]: W_{ji}>0\}}$, respectively. The out-degree of $v_i$ is ${\dout_i=|\Nout_i|}$; we simply drop the in and out indices when the graph is undirected.  In the directed graph $\G=(\V,\Es,W) $, a path is a sequence of distinct nodes $v_{i_1},\ldots,v_{i_k}$ for some ${k\in [n]}$ such that $(v_{i_j},v_{i_{j+1}})\in \Es$ for all ${j\in [k-1] }$. A directed graph is \emph{strongly connected} if there is a path between any pair of nodes. If the directed graph ${\G=(\V,\Es,W)} $ is strongly connected, we say that $W$ is irreducible. For graphs $\G_1=(\V,\Es_1) $ and $\G_2=(\V,\Es_2)$ on the node set $\V$, $\G = \G_1 \cup \G_2$ is the graph on the node set $\V$ with the edge set $\Es = \Es_1\cup\Es_2$.
	\subsubsection{Sequences of random column-stochastic matrices}
	Let $\S_{n}$ be the set of $n\times n$ column-stochastic matrices, and let $\mathcal{F}_{\S_{n}}$ denote the Borel $\sigma$-algebra on $\S_{n}$. Given a probability space $(\Omega, \B, \mu)$, a measurable function $ {\map{W}{(\Omega, \B, \mu)}{(\S_n, \mathcal{F}_{\S_{n}})}} $ is called a random column-stochastic matrix, and a sequence $ \{W(t)\} $ of such measurable functions on $(\Omega, \B, \mu)$ is called a random column-stochastic matrix sequence; throughout, we assume that $t\in \integers_{\geq 0}$. Note that for any $ \omega \in \Omega $, one can associate a sequence of directed graphs $\{\G(t)(\omega)\}$ to $\{W(t)(\omega)\} $, where $(v_i,v_j)\in \Es(t)(\omega)$ if and only if $W_{ji}(t)(\omega)>0$. This in turn defines a sequence of random directed graphs on $ \V=\{v_1,\ldots, v_n\} $, which we denote by $\{\G(t)\}$.

\subsection{Push-sum Algorithm}
	
	Let us present the push-sum dynamics, introduced in~\cite{DK-AD-JG:03}, formally: consider a network of nodes $ \V=\{v_1,v_2,\ldots, v_n\} $, where node $ v_i \in \V$ has an initial state $x_i(0)\in \real$. The push-sum algorithm is defined as follows. Each node $ v_i $ maintains and updates, at each time ${t\geq 0}$, two state variables $ x_i(t) $ and $ y_i(t) $. The first state variable is initialized to $ x_i(0) $ and the second one is initialized to $ y_i(0)=1 $, for all $i\in [n]$. At time $t\geq0$, node $v_i$ sends $\frac{x_i(t)}{\dout_i(t)}$ and $\frac{y_i(t)}{\dout_i(t)}$ to its out-neighbors in the random directed graph $\G(t) = (\V, \Es(t), W(t))$, which we assume to contain self-loops at all nodes for all $t\geq 0$. At time $ (t+1) $, node $ v_i $ updates its state variables according to 
	\begin{align}\label{eqn:main-algo}
	\x_i(t+1) &=\sum_{j\in \Nin_i(t)}\frac{\x_j(t)}{\dout_j(t)},\cr
	y_i(t+1) &= \sum_{j\in \Nin_i(t)}\frac{y_j(t)}{\dout_j(t)},\cr
	\z_i(t+1) &= \frac{\x_i(t+1)}{y_i(t+1)}.
	\end{align}
	Here, $\z_i(t+1) $ is the estimate by node $ v_i $ of the average $ \bar{\x}=\frac{1}{n} \sum_{i=1}^n\x_i(0) $. As a by-product, in~\cite{AN-AO:15-tac}, it is shown that as long as the sequence of graphs satisfies some uniform strong connectivity over time, the state $ z_i(t) $ is guaranteed to converge to $ \bar{\x} $ for all $ i $; in fact, one can obtain convergence rates. In a recent work~\cite{PR-BG-TL-BT:19}, we established an ergodicity criterion for the sequence of column-stochastic matrices corresponding to the push-sum protocol, and demonstrated that a large class of time-varying sequences of random directed graphs satisfies such conditions. Applying this result to a random sequence of graphs generated using a time-varying $B$-irreducible probability matrix, we obtained the first known convergence rates for the push-sum algorithm in random settings. It turns out that pairing the push-sum protocol~\eqref{eqn:main-algo} with subgradient flow leads to a distributed algorithm with guaranteed convergence to a solution of~\eqref{minimization}, when the directed graph is time-varying but satisfies some uniform strong connectivity. It is shown in~\cite[Remark~5]{AN-AO-WS-17} that in the particular case where the graph is undirected, the results are actually extendable to random settings. This being said, extending such results to general random directed graphs, as demonstrated in our previous work~\cite{PR-BG-TL-BT:18-necsys} and also here, is highly non-trivial.
	
	
	
%
%
%

	\section{Statement of the Problem}\label{sec:spa}
	Consider the distributed optimization problem~\eqref{minimization}. 
	Suppose now that the communication layer between nodes at discrete time instances $t\geq 0$ is specified by a sequence of random directed graphs $\{\G(t)\}$. Starting with some initial estimate of an optimal solution, at each time $t$, each node communicates with its neighbors in $\G(t)$: sends its values to its out-neighbors and updates its values according to those of its in-neighbors. We assume that the set of optimal solutions is nonempty. One standing assumption throughout this paper is that each node knows its out-degree at every time, which is shown to be necessary in~\cite{JH-JT:15}. Our main objective in~\cite{PR-BG-TL-BT:18-necsys} was to show that a modified version of the so-called subgradient-push algorithm successfully accomplishes the task of minimizing $F(\z)$ in a distributed fashion, and under the assumption that the communication network is directed and random. This key point distinguishes our work from the existing results in the literature~ \cite{IL-AO:11}, \cite{AN:14-sv}, \cite{AN-AO:15-tac}, \cite{AN-AO:09}, \cite{SSR-AN-VVV:10}. We complete this work by providing convergence rate results.

	\section{Modified Subgradient-Push Algorithm}\label{sec:algo}
	
	In the subgradient-push (SP) algorithm, each node $v_i$ maintains and updates, at each time $t \geq 0$, two vector variables $\x_i(t),\w_i(t)\in \real^d$ as well as a scalar variable $y_i(t)\in \real$. For each $i\in [n]$, the vector $\x_i(0)$ is initialized to the estimate of an optimal solution of node $v_i$ and $y_i(0) $ is initialized to $1$, i.e., $y_i(0)=1$. At time $t \geq 0$, node $v_i$ sends $\x_i(t)$ and $y_i(t)$ to its out-neighbors in the directed graph 
	of the available communication channels $\Gp(t) = (\V,\Esp(t))$, which is assumed to contain self-loops at all nodes. At time $(t + 1)$, node $v_i$ updates its variables according to
	\begin{align}\label{Sub-Push}
	\w_i(t+1) &= \sum_{j\in \Ninp_i(t)} \frac{\x_j(t)}{\doutp_j(t)},\cr
	y_i(t+1) &= \sum_{j\in \Ninp_i(t)} \frac{y_j(t)}{\doutp_j(t)},\cr 
	\z_i(t+1) &= \frac{\w_i(t+1)}{y_i(t+1)},\cr 
	\x_i(t+1) & = \w_i(t+1) - \alpha(t+1)\g_i(t+1),
	\end{align}
	where $\g_i(t+1)$ is a subgradient of the convex function $f_i$ at $\z_i(t+1)$ and 
	$\alpha(t) = \frac{1}{t^{\gamma}}$ for some $0.5<\gamma < 1$. For this choice if $\alpha(t)$ we have 
	\begin{align*}
	\sum_{t=1}^{\infty}\alpha(t) = \infty, \quad\text{ and }\sum_{t=1}^{\infty}\alpha^2(t)<\infty.
	\end{align*}
	At each time $t$, $\z_i(t)$ is the estimate by node $v_i$ of a minimizer of $F(\z)$. For each $i\in [n]$, $\Ninp_i(t)$ is the set of in-neighbors of $v_i$ and $\doutp_i(t)$ is its out-degree in $\Gp(t)$. We assume that all functions $f_i$ are Lipschitz continuous, i.e., for all $i\in [n]$ there exists $L_i$ such that $\|\g_i\|\leq L_i$. For our future analysis we define $L = \sum_{i=1}^{n}L_i$. 
	
	As shown in for example~\cite{IL-AO:11}, in undirected settings, or directed settings with doubly stochastic weighting, the same algorithm can be used along sample paths and that leads to a convergence result in expectation. Now, one hopes to prove a similar result for~\eqref{Sub-Push} in random settings, with an important caveat that this algorithm heavily relies on division of two correlated random variables. More importantly, the proof of the convergence in deterministic settings heavily relies on bounding the denominator away from zero. This issue makes it impossible to carry the analysis of~\cite{PR-BG-TL-BT:19} to random setting. The main novelty of this work is to modify this algorithm, while still using its useful features; this is what we describe next.

	\subsection{The Modified Subgradient-Push Algorithm}\label{sec:MSP}
	Here we introduce the modified subgradient-push algorithm (MSP). In the MSP algorithm, each node ${v_i\in \V} $ sends its values to its out-neighbors in $\Gp(t)$ only if ${y_i(t)\geq \frac{1}{n^{2n}}}$. In this sense, at time $ t $ node $ v_i $ only receives information from the subset $ \Nin_i(t)\subseteq \Ninp_i(t) $ of its in-neighbors given by   
	\[
	\Nin_i(t)=\Ninp_i(t)\backslash \{ v_j \ | \ y_j(t)< \frac{1}{n^{2n}}\}.
	\]
	Indeed, this construction induces an effective communication network graph \sloppy ${\G(t) = (\V,\Es(t),W(t))}$ at time $t$ with the same set of nodes as $ \Gp(t) $ and the set of edges $\Es(t) \subseteq \Esp(t)$. 
	Similar to the SP algorithm, the MSP algorithm is given as 
	\begin{align}\label{M-Sub-Push}
	\w_i(t+1) &= \sum_{j\in \Nin_i(t)} \frac{\x_j(t)}{\dout_j(t)},\cr
	y_i(t+1) &= \sum_{j\in \Nin_i(t)} \frac{y_j(t)}{\dout_j(t)},\cr 
	\z_i(t+1) &= \frac{\w_i(t+1)}{y_i(t+1)},\cr 
	\x_i(t+1) & = \w_i(t+1) - \alpha(t+1)\g_i(t+1),
	\end{align}
	where $ \Ninp_i(t) $ is replaced with $ \Nin_i(t) $ for all $ i \in [n] $ and $ t \geq 0 $. Here the $\dout_i(t)$ are the out-degrees of nodes in the effective communication network graph $\G(t)$. Throughout this article, whenever we write $ \G(t) $, or $ \Nin_i(t) $, we refer to the subgraph of $ \Gp(t) $ resulting from this modification.

	\section{Random Setting and Main Result}
	
	We next state our assumptions on the sequence of random graphs; as we mentioned in Section~\ref{sec:MSP}, we denote by $\Gp(t) = (\V,\Esp(t))$ the graph of the available communication channels and by ${\G(t) = (\V,\Es(t),W(t))}$ the communication network graph utilized by the nodes at time~$t$. 
	
	\begin{assumption}\label{assumption:1}
		Let $\mathcal{G} = \{\Gp_1,\Gp_2,\ldots,\Gp_{2^{n^2-n}}\}$ be the set of all the possible graphs of available communication channels on the node set \sloppy ${\V}$ with self-loops at all nodes. We assume that the sequence of realized available communication graphs $ \{\Gp(t)\}$ satisfies:
		\begin{enumerate}
			\item at each time ${t\geq 0}$, the corresponding graph for communications is $ \Gp_b$ with probability ${\Pr(\Gp(t) = \Gp_b) = p_b}$, where $b\in [2^{n^2-n}]$;
			\item $\bigcup_{b: p_b > 0} \Gp_b$ is strongly connected;
			\item the graphs $\Gp(t)$ are independent of each other.
		\end{enumerate}
	\end{assumption}
	
	This assumption states that the graphs of available communication channels are drawn independently and from the set of all possible graphs on the node set $\V$ with self-loops at all nodes. Note that part (ii) imposes a mild connectivity assumption on this sequence of graphs, which is analogous to deterministic settings. 

	In~\cite{PR-BG-TL-BT:18-necsys}, it is shown that the proposed algorithm, namely the MSP algorithm, converges asymptotically to an optimal solution for our random setting, as stated next. 	
		\begin{theorem}\cite{PR-BG-TL-BT:18-necsys}\label{thm:convergence}
		Consider the MSP algorithm~\eqref{M-Sub-Push} and suppose that the sequence of available communication channels $\{\Gp(t)\}$ satisfies Assumption 1. Then we have 
			\begin{align*}
			\lim_{t\rightarrow \infty} \z_i(t) = \z^*, \quad\text{for all }i\in [n],
			\end{align*}
			almost surely, where $\z^*$ is an optimal solution of~\eqref{minimization}. 
	\end{theorem}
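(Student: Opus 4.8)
The plan is to split the analysis into a \emph{consensus} part and an \emph{optimization} part, and then to fuse them through a stochastic-approximation argument that delivers almost sure convergence. First I would track the mass-weighted network average. Writing the first two lines of~\eqref{M-Sub-Push} in matrix form, the coefficient matrix $W(t)$ induced by the effective graph $\G(t)$ is column-stochastic on the set of participating nodes, so that (up to the thresholding correction) the aggregate mass $\sum_i y_i(t)$ and the aggregate state $\sum_i \w_i(t)$ are transported but conserved. Defining $\bar{\x}(t) = \frac{1}{n}\sum_{i=1}^n \x_i(t)$ and combining column-stochasticity with the last line of~\eqref{M-Sub-Push} yields the perturbed centralized recursion
\[
\bar{\x}(t+1) = \bar{\x}(t) - \frac{\alpha(t+1)}{n}\sum_{i=1}^n \g_i(t+1),
\]
which behaves like a centralized subgradient step on $F$, except that each $\g_i(t+1)$ is a subgradient of $f_i$ evaluated at the local estimate $\z_i(t+1)$ rather than at $\bar{\x}(t)$. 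The error from this mismatch is controlled by the consensus gap $\|\z_i(t+1) - \bar{\x}(t)\|$.

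Second, I would show that this consensus gap vanishes geometrically along almost every sample path. This is where the modification and Assumption~1 are indispensable: the threshold $\frac{1}{n^{2n}}$ guarantees \emph{by construction} that every participating node has $y_i(t) \geq \frac{1}{n^{2n}}$, so the division $\z_i = \w_i/y_i$ is uniformly well posed and the denominator cannot collapse along any sample path---exactly the obstruction that blocks a direct transfer of the deterministic argument. Invoking the ergodicity criterion for column-stochastic sequences from~\cite{PR-BG-TL-BT:19}, together with the strong connectivity of $\bigcup_{b:p_b>0}\Gp_b$ in Assumption~1(ii) and the independence in Assumption~1(iii), the backward products of the $W(t)$ are weakly ergodic almost surely. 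This gives a bound of the form $\|\z_i(t+1)-\bar{\x}(t)\| \le C\lambda^{t} + C\sum_{s\le t}\lambda^{t-s}\alpha(s)$ for some $\lambda\in(0,1)$, whose right-hand side is summable when weighted by $\alpha(t)$ because $\sum_t\alpha^2(t)<\infty$.

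Third, I would close with a supermartingale argument. Fixing an optimizer $\z^*$ and using Lipschitz continuity of the $f_i$ (hence $\|\g_i\|\le L_i$), I would expand $\|\bar{\x}(t+1)-\z^*\|^2$, invoke convexity of $F$ to extract a descent term $-\frac{2\alpha(t+1)}{n}(F(\bar{\x}(t))-F(\z^*))$, and absorb the consensus mismatch and the $\alpha^2$ contribution into the residual. Since $\sum_t\alpha^2(t)<\infty$ and the weighted consensus errors are summable, $\|\bar{\x}(t)-\z^*\|^2$ is an almost supermartingale; the Robbins--Siegmund theorem then yields that it converges almost surely and that $\sum_t\alpha(t)(F(\bar{\x}(t))-F(\z^*))<\infty$ almost surely. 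Because $\sum_t\alpha(t)=\infty$, this forces $F(\bar{\x}(t))\to F(\z^*)$ along a subsequence, and combined with the convergence of $\|\bar{\x}(t)-\z^*\|$ one concludes $\bar{\x}(t)\to\z^*$ almost surely; the vanishing consensus gap then upgrades this to $\z_i(t)\to\z^*$ for every $i$.

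The main obstacle I anticipate is the second step: transferring the deterministic geometric-ergodicity estimate to the random directed setting almost surely while keeping the denominators $y_i(t)$ bounded away from zero. In deterministic $B$-connected settings the lower bound on $y_i(t)$ is automatic, but under random graphs no uniform deterministic bound exists; the thresholding modification is precisely what restores a sample-path-wise lower bound, and verifying that the resulting effective graph sequence $\{\G(t)\}$ still satisfies the hypotheses of the ergodicity criterion of~\cite{PR-BG-TL-BT:19}---despite edges being removed adaptively according to the realized values $y_i(t)$---is the delicate point on which the entire argument hinges.
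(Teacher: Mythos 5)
Your proposal follows essentially the same route as the proof the paper imports from~\cite{PR-BG-TL-BT:18-necsys}: column-stochasticity of the effective $W(t)$ reduces $\bar{\x}(t)$ to a perturbed centralized subgradient recursion, the thresholding yields the uniform lower bound $\delta = 1/n^{2n}$ on the $y_i$ exploited in Corollary~\ref{Cor:bnd}, the consensus gap is controlled through the directed infinite flow property and the decay of $\Lambda_{t,s}$ (Proposition~\ref{prop:1}, Lemmas~\ref{connectivity} and~\ref{lem:prodrate}), and the conclusion is closed with the Nedi\'c--Olshevsky perturbed-subgradient machinery. The one imprecision is that the geometric decay of $\Lambda_{t,s}$ holds in expectation and with exponentially small failure probability rather than as a uniform sample-path bound, but the almost sure summability of the $\alpha(t)$-weighted consensus errors that your supermartingale step requires still follows from the expectation estimate, so the argument is unaffected.
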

	
	The main result of this work is obtaining convergence rates, as we state next. 
	
	\begin{theorem}\label{thm:rate}
		Consider the MSP algorithm~\eqref{M-Sub-Push} and suppose that the sequence of available communication channels $\{\Gp(t)\}$ satisfies Assumption 1. Define $S(0)=0$ and $S(t) = \sum_{s=0}^{t-1}\alpha(s+1)$ for all $t\geq 1$. Assume that every node $i$ maintains the vector $\tilde{\z}_i(t)\in \real^d$ initialized with any $\tilde{\z}_i(0)\in \real^d$ with the following update rule:
			\begin{align*}
			\tilde{\z}_i(t+1) = \frac{\alpha(t+1)\z_i(t+1) + S(t)\tilde{\z}_i(t)}{S(t+1)},\quad \forall t\geq 0.
			\end{align*}
			Then, we have for all $t\geq 1$, $i\in[n]$, and any ${\z^{*} \in Z^{*}}$,
			\begin{align*}
			\Ex&\left[F(\tilde{\z}_i(t+1))-F(\z^*)\right]
			\\&\leq
			\Gamma\left[\frac{n\|\bar{\x}(0) - \z^*\|_1}{2} \right.
			+ \left(1 + \frac{1}{2\gamma-1}\right)\frac{L^2}{2n} 
			\\ &+\frac{60 L\sum_{j=1}^{n}\|x_j(0)\|_1}{\delta(1-\lambda)}
			\left.+\frac{60dL^2}{\delta(1-\lambda)}\left(1+\frac{1}{2\gamma-1}\right)\right],
			\end{align*}
			where $\Gamma = \frac{(1-\gamma)}{(t+2)^{1-\gamma}-1}$.
	\end{theorem}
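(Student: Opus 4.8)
The plan is to reduce the statement to two essentially separate estimates: a subgradient-descent estimate for the running average $\bar{\x}(t) \triangleq \frac{1}{n}\sum_{i=1}^n \x_i(t)$, and a consensus (ergodicity) estimate controlling how far each local estimate $\z_i(t+1)$ drifts from $\bar{\x}(t)$. First I would exploit column-stochasticity: since each column of the weight matrix sums to $1$ and the $\w$-update merely redistributes mass, summing the first and last lines of~\eqref{M-Sub-Push} over $i$ gives the exact average dynamics
\begin{align*}
\bar{\x}(t+1) = \bar{\x}(t) - \frac{\alpha(t+1)}{n}\sum_{i=1}^n \g_i(t+1).
\end{align*}
Because $\tilde{\z}_i(t+1) = \frac{1}{S(t+1)}\sum_{s=0}^{t}\alpha(s+1)\z_i(s+1)$ is a convex combination of the iterates $\z_i(s+1)$ (the $S(0)=0$ term drops out), Jensen's inequality reduces the claim to bounding the weighted sum $\sum_{s=0}^{t}\alpha(s+1)\bigl(F(\z_i(s+1))-F(\z^*)\bigr)$ and dividing by $S(t+1)$.

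For the descent estimate I would expand $\|\bar{\x}(t+1)-\z^*\|^2$ using the average dynamics and split the linear term as $\langle \g_i(t+1),\bar{\x}(t)-\z_i(t+1)\rangle + \langle \g_i(t+1),\z_i(t+1)-\z^*\rangle$; convexity of $f_i$ lower-bounds the second inner product by $f_i(\z_i(t+1))-f_i(\z^*)$, while $\|\g_i\|\le L_i$ controls the first by $L_i\|\bar{\x}(t)-\z_i(t+1)\|$. Replacing $\sum_i f_i(\z_i(t+1))$ by $F(\bar{\x}(t))$ at the cost of another $\sum_i L_i\|\z_i(t+1)-\bar{\x}(t)\|$ term, bounding the quadratic term by $\frac{\alpha^2(t+1)}{n^2}L^2$, and telescoping the distances $\|\bar{\x}(s)-\z^*\|^2$ over $s=0,\dots,t$ yields
\begin{align*}
\sum_{s=0}^{t}\alpha(s+1)\bigl(F(\z_i(s+1))-F(\z^*)\bigr) \le \tfrac{n}{2}\|\bar{\x}(0)-\z^*\|_1 + \tfrac{L^2}{2n}\sum_{s=0}^{t}\alpha^2(s+1) + (\text{consensus error}),
\end{align*}
where one further Lipschitz step converts $F(\z_i(s+1))$ to $F(\bar{\x}(s))$. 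The middle term is finite since $\sum_{s}\alpha^2(s+1)=\sum_s (s+1)^{-2\gamma}\le 1+\frac{1}{2\gamma-1}$, producing the factor $\bigl(1+\frac{1}{2\gamma-1}\bigr)\frac{L^2}{2n}$; integrating $\alpha$ gives $S(t+1)\ge \frac{(t+2)^{1-\gamma}-1}{1-\gamma}$, so dividing by $S(t+1)$ is exactly multiplication by a bound on $\Gamma$.

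It remains to bound, in expectation, the accumulated consensus error $\sum_{s=0}^{t}\alpha(s+1)\sum_{j}L_j\|\z_j(s+1)-\bar{\x}(s)\|$, and this is the crux and the main obstacle. I would invoke the push-sum ergodicity estimate of~\cite{PR-BG-TL-BT:19}: the modification enforcing $y_i(t)\ge \frac{1}{n^{2n}}=:\delta$ keeps every denominator bounded away from zero, so under Assumption~\ref{assumption:1} the products of the random column-stochastic matrices contract in expectation at a geometric rate $\lambda<1$, giving a bound of the form
\begin{align*}
\|\z_j(s+1)-\bar{\x}(s)\|_1 \le \frac{C}{\delta}\Bigl(\lambda^{s}\sum_{r}\|x_r(0)\|_1 + \sum_{r=1}^{s}\lambda^{s-r}\alpha(r)\,dL\Bigr)
\end{align*}
in expectation, with $C$ absolute. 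Substituting and exchanging summation order, the first part sums to $\sum_s\alpha(s+1)\lambda^{s}\le \frac{1}{1-\lambda}$, yielding $\frac{60L\sum_j\|x_j(0)\|_1}{\delta(1-\lambda)}$, while the double sum collapses to $\frac{1}{1-\lambda}\sum_r\alpha^2(r)\le \frac{1}{1-\lambda}\bigl(1+\frac{1}{2\gamma-1}\bigr)$, yielding $\frac{60dL^2}{\delta(1-\lambda)}\bigl(1+\frac{1}{2\gamma-1}\bigr)$. Taking expectations throughout (the descent inequality holds pathwise; only the consensus bound needs the expectation and Assumption~\ref{assumption:1}), dividing by $S(t+1)$, and collecting terms gives the stated bound. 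The delicate points are that the random denominators are correlated with the numerators — which is exactly why the $y_i\ge\delta$ truncation is indispensable — and that $\lambda$ and its constant must be obtained in expectation rather than pathwise, which is precisely what the ergodicity result of~\cite{PR-BG-TL-BT:19} supplies.
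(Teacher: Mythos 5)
Your proposal is correct and follows essentially the same route as the paper: the paper's proof of Theorem~\ref{thm:rate} is a one-line reduction to the argument of Theorem~2 in~\cite{AN-AO:15-tac} (average dynamics via column-stochasticity, Jensen's inequality on the convex combination $\tilde{\z}_i$, the standard descent estimate on $\|\bar{\x}(t)-\z^*\|^2$), combined with Lemma~\ref{lem:conv}, which is exactly your expected consensus-error bound built from Corollary~\ref{Cor:bnd} and Lemma~\ref{lem:prodrate}. Your write-up is in fact more explicit than the paper's, and correctly identifies the $y_i\geq\delta$ truncation and the in-expectation contraction rate $\lambda$ as the points where the random setting departs from the deterministic argument.
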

	The rest of this paper is dedicated to the proof of this result. 
	\section{Convergence Rate Analysis}
	We start by defining a perturbed version of the push-sum algorithm subject to the modification that we have made. In this section, for most parts, we work with scalar variables.
	\subsection{Modified Perturbed-Push Algorithm}
	Here we describe the modified perturbed-push (MPP) algorithm. In this algorithm, each node $v_i$ maintains and updates scalar variables $x_i(t),w_i(t)$ and $y_i(t)$, where the $x_i(0)$ are arbitrary scalars and the $y_i(0) $ are initialized to $1$, for all $i\in [n]$. The update rule at time $t+1$ is  
	\begin{align}\label{M-Per-Push}
	w_i(t+1) &= \sum_{j\in \Nin_i(t)} \frac{x_j(t)}{\dout_j(t)},\cr
	y_i(t+1) &= \sum_{j\in \Nin_i(t)} \frac{y_j(t)}{\dout_j(t)},\cr 
	z_i(t+1) &= \frac{w_i(t+1)}{y_i(t+1)},\cr 
	x_i(t+1) & = w_i(t+1) + \eps_i(t+1),
	\end{align}
	where the $\eps_i(t)$ are perturbations at time $t$, to be specified later. Similar to the MSP algorithm, node $v_i$ shares its values only if $ y_i(t)\geq \frac{1}{n^{2n}}$, and the $\dout_i(t)$ and the $\Nin_i(t)$ are the out-degrees and the in-neighbors of the nodes in the effective communication network graph $\G(t)$, respectively. 
	
	To write the algorithm in a matrix form,  for all $t\geq 0$, we let the column-stochastic matrix $W(t)$ be the weighted adjacency matrix associated with the effective communication network graph $\G(t)$ with entries 
	\begin{equation}\label{eqn:W}
	W_{ij}(t) = 
	\begin{cases} \frac{1}{\dout_j(t)} &\text{if } j\in \Nin_i(t),\\
	0&\text{otherwise.} 
	\end{cases}
	\end{equation}
	Writing the MPP algorithm in matrix form, we have for all $t\geq0$
	\begin{align}\label{alg}
	w(t+1) &= W(t)x(t),\cr
	y(t+1) &= W(t)y(t),\cr 
	z_i(t+1) &= \frac{w_i(t+1)}{y_i(t+1)},\quad\text{for all }i\in [n]\cr 
	x(t+1) & = w(t+1) + \eps(t+1),
	\end{align}
	where $\eps(t) = (\eps_1(t),\ldots,\eps_n(t))'$ is the vector of perturbations at time $t$. Here, $ w(t)= (w_1(t), \ldots, w_n(t))\in \real^n $, $ z(t)=(z_1(t), \ldots, z_n(t))\in \real^n $, and in this sense we have treated each component of the individual agent's state variables separately. In the analysis of the MPP algorithm, we assume that $\|\eps(t)\|_1\leq \frac{U}{t^\gamma}$. This assumption holds when we regard the subgradient term in the MSP algorithm as perturbation. Throughout, the $W(t)$ denote the adjacency matrices associated with the effective communication graphs defined in~\eqref{eqn:W}.
	
	\subsection{Convergence of the MPP algorithm}
	We study the convergence properties of MPP algorithm.We first tackle the issue of connectivity of the sequence of matrices that are generated through the MPP algorithm. One of the key properties that we need to ensure for the sequence of random matrices induced by the MPP algorithm is  the directed infinite flow property, which we recall next. 
	
	\begin{definition}~\cite[Definition 3]{PR-BG-TL-BT:19}
		We say that a sequence of matrices $\{W(t)\}$ has the directed infinite flow property if for any non-trivial ${S \subset [n]}$ $$ \sum_{t=0}^{\infty} W_{SS^c}(t) = \infty, \quad\text{almost surely.}$$
	\end{definition}

	The following proposition presents an upper bound on how well the sequences $z_i(t+1)$ estimate the average $\bar{x}(t) := \frac{1^{\prime} x(t)}{n}$ for each sample path, when the sequence of matrices $\{W(t)\}$ has the directed infinite flow property. This will allow us to state the first connectivity result in a random setting, previously provided in~\cite{PR-BG-TL-BT:18-necsys}.
	
	\begin{proposition}\cite{PR-BG-TL-BT:18-necsys}\label{prop:1}
		Consider the MPP algorithm~\eqref{alg} and suppose that the sequence $\{W(t)\}$ has the directed infinite flow property, almost surely. Then, we have
		\begin{align}\label{eqn:Thm1}
		| z_i&(t+1) - \bar{x}(t) | 
		\cr&\leq\frac{2}{y_i(t+1)}\left( \Lambda_{t,0} \|x(0)\|_1+ \sum_{s=1}^{t} \Lambda_{t,s} \|\eps(s)\|_1 \right),
		\end{align}
		almost surely, where $\Lambda_{t,s}\in (0,1)$ for all $t\geq s\geq 0$.
	\end{proposition}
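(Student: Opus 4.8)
The plan is to unroll the linear recursion into products of the weight matrices, reduce the claim to a weak-ergodicity estimate on those products, and then import that estimate from~\cite{PR-BG-TL-BT:19}, where the directed infinite flow property is turned into a quantitative mixing statement. First I would introduce the backward transition matrices $\Phi(t,s) \triangleq W(t)W(t-1)\cdots W(s)$ for $t\geq s$, with the convention $\Phi(s-1,s)=I$. Iterating $x(t+1)=W(t)x(t)+\eps(t+1)$ together with $w(t+1)=W(t)x(t)$ and $y(t+1)=W(t)y(t)$, $y(0)=\mathbf{1}$, gives by a routine induction the closed forms
\begin{align*}
w(t+1) &= \Phi(t,0)x(0) + \sum_{s=1}^{t}\Phi(t,s)\eps(s), \\
y(t+1) &= \Phi(t,0)\mathbf{1}.
\end{align*}
Since every $W(t)$ is column-stochastic we have $\mathbf{1}'\Phi(t,s)=\mathbf{1}'$ for all $t\geq s$, whence $n\bar{x}(t)=\mathbf{1}'x(t)=\mathbf{1}'x(0)+\sum_{s=1}^{t}\mathbf{1}'\eps(s)$ and, in particular, $\mathbf{1}'y(s)=n$ for every $s$.

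Next I would write the error in the factored form
\begin{align*}
z_i(t+1)-\bar{x}(t) = \frac{w_i(t+1)-y_i(t+1)\bar{x}(t)}{y_i(t+1)},
\end{align*}
and expand the numerator, using the expression for $n\bar{x}(t)$ above, as
\begin{align*}
\sum_{j}\Big([\Phi(t,0)]_{ij}-\tfrac{y_i(t+1)}{n}\Big)x_j(0) + \sum_{s=1}^{t}\sum_{j}\Big([\Phi(t,s)]_{ij}-\tfrac{y_i(t+1)}{n}\Big)\eps_j(s).
\end{align*}
The crucial observation is that the semigroup identity $y(t+1)=\Phi(t,s)y(s)$ together with $\mathbf{1}'y(s)=n$ exhibits $\tfrac{y_i(t+1)}{n}=\sum_k \tfrac{y_k(s)}{n}\,[\Phi(t,s)]_{ik}$ as a \emph{convex} combination of the entries of row $i$ of $\Phi(t,s)$ (the weights $\tfrac{y_k(s)}{n}$ are nonnegative and sum to $1$). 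Consequently, bounding uniformly over $i$,
\begin{align*}
\max_{i,j}\Big|[\Phi(t,s)]_{ij}-\tfrac{y_i(t+1)}{n}\Big| \leq \max_{i,j,k}\big|[\Phi(t,s)]_{ij}-[\Phi(t,s)]_{ik}\big| =: 2\Lambda_{t,s},
\end{align*}
so $\Lambda_{t,s}$ is (half) the maximal row spread of $\Phi(t,s)$. The directed infinite flow property is exactly what guarantees, via the ergodicity result of~\cite{PR-BG-TL-BT:19}, that these products are weakly ergodic and that the resulting coefficients satisfy $\Lambda_{t,s}\in(0,1)$ along almost every sample path.

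Finally, applying the elementary inequality $|\sum_j c_j a_j|\leq (\max_j|c_j|)\,\|a\|_1$ termwise to both sums and dividing by $y_i(t+1)$ yields
\begin{align*}
|z_i(t+1)-\bar{x}(t)| \leq \frac{2}{y_i(t+1)}\Big(\Lambda_{t,0}\|x(0)\|_1 + \sum_{s=1}^{t}\Lambda_{t,s}\|\eps(s)\|_1\Big),
\end{align*}
which is the assertion. I expect the main obstacle to be the ergodicity estimate: showing that the row spreads of $\Phi(t,s)$ contract and that the $\Lambda_{t,s}$ are well-defined random variables lying in $(0,1)$ is the substantial input, and it is precisely there that the directed infinite flow property, rather than a deterministic uniform-connectivity assumption, must be converted into a quantitative mixing bound. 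The remaining steps, including the verification that $y_i(t+1)>0$ and that the sharing threshold $\tfrac{1}{n^{2n}}$ keeps each $W(t)$ column-stochastic, are routine by comparison.
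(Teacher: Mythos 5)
Your argument is correct and is essentially the same route as the cited proof (the paper defers this proposition to~\cite{PR-BG-TL-BT:18-necsys}, which follows the standard perturbed push-sum decomposition of~\cite{AN-AO:15-tac}): unroll the recursion into backward products $\Phi(t,s)$, use column-stochasticity and $y(t+1)=\Phi(t,s)y(s)$ to exhibit $y_i(t+1)/n$ as a convex combination of the entries of row $i$ of $\Phi(t,s)$, and bound the numerator of $z_i(t+1)-\bar{x}(t)$ by the row spread times the $1$-norms. One cosmetic remark: the displayed inequality is purely algebraic and holds samplewise for any column-stochastic sequence; the directed infinite flow property is not what places $\Lambda_{t,s}$ in $(0,1)$ (that follows from the entries of $\Phi(t,s)$ lying in $[0,1]$) but is what later forces $\Lambda_{t,s}$ to decay, as in Lemma~\ref{lem:prodrate}.
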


	Next, we state two lemmas on the connectivity of the sequence of matrices $\{W(t)\}$ given our random setting, followed by a useful corollary. These results are  provided in~\cite{PR-BG-TL-BT:18-necsys}. 
	\begin{lemma}\cite{PR-BG-TL-BT:18-necsys}\label{connectivity}
		Consider the MPP algorithm~\eqref{alg} and let $ W(t) $ given in~\eqref{eqn:W} be the weighted adjacency matrix of the graph induced by the available communication channel at time $ t $, and suppose that Assumption~\ref{assumption:1} holds for the sequence $W(t)$. Then,
		\begin{enumerate}
			\item for all $t\geq 0$, we have\[ \Pr(W(t+2n-3:t) \text{ irreducible})\geq p>0,\]
			where $p=(\min_{b:p_b>0} p_b)^{2n-2}$.
			\item $\{W(t)\}$ has the directed infinite flow property. 
		\end{enumerate}
	\end{lemma}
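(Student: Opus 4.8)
I read $W(t+2n-3:t)$ as the backward product $W(t+2n-3)W(t+2n-4)\cdots W(t)$ of the $2n-2$ effective adjacency matrices over the window $[t,t+2n-3]$, and recall that such a nonnegative product is irreducible exactly when its induced graph is strongly connected, i.e.\ when for every ordered pair $(v_i,v_j)$ there is a directed walk from $v_j$ to $v_i$ traversing an edge of $\G(t+\tau)$ at the $\tau$-th step. Since every $\Gp_b$ carries self-loops, an active node retains its self-loop in $\G(t)$, so a walk may idle there; hence any walk of length at most $2n-2$ can be padded to length exactly $2n-2$, and the product is irreducible iff the time-varying effective graph over the window joins every ordered pair by a walk of length at most $2n-2$. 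The essential bookkeeping is which nodes are \emph{active} (transmit) at each step, since an inactive node $v_j$ with $y_j<\tfrac{1}{n^{2n}}$ contributes a zero column to the corresponding $W(\cdot)$.

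For part (i) I would build a favorable event from Assumption~\ref{assumption:1}(ii). The union $H:=\bigcup_{b:p_b>0}\Gp_b$ is strongly connected on $n$ nodes, hence has diameter at most $n-1$; fixing a root $v_r$, it contains an in-branching toward $v_r$ and an out-branching away from $v_r$, each of depth at most $n-1$. I would prescribe a specific positive-probability available graph at each of the $2n-2$ times in the window: the first $n-1$ carry the in-branching edges, routing every node's value toward $v_r$, and the last $n-1$ carry the out-branching edges, routing $v_r$'s value out to every node, so that every pair $(v_i,v_j)$ is joined through $v_r$ by a walk of length at most $2n-2$. Because the $\Gp(t)$ are independent (Assumption~\ref{assumption:1}(iii)) and each prescribed graph has probability at least $\min_{b:p_b>0}p_b$, this event has probability at least $(\min_{b:p_b>0}p_b)^{2n-2}=p$, the count $2n-2$ of prescribed graphs being exactly the exponent in $p$.

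The crux, and what I expect to be the main obstacle, is that this event constrains only the \emph{available} graphs $\Gp(t+\tau)$, whereas irreducibility is a property of the \emph{effective} graphs $\G(t+\tau)$, from which the outgoing edges of every node with $y_j<\tfrac{1}{n^{2n}}$ are deleted; moreover this deletion depends on the entire past through the $y$-history. I must therefore certify that, on the favorable event, the prescribed edges survive the thresholding, i.e.\ that each node used as a forwarder is active at the moment it must transmit. Here I would exploit the recursion $y(t+1)=W(t)y(t)$: each retained column sums to $1$, so the total mass $\one' y(t)$ is non-increasing and bleeds off at most $n\cdot n^{-2n}$ per step, which keeps a node of comparatively large mass available to feed the others, while the self-loops let a node that has just received mass retain it. The threshold $\tfrac{1}{n^{2n}}$ and the window length $2n-2$ are tuned precisely so that, as mass is scattered from $v_r$, the relied-upon forwarders stay (or are brought back) above threshold before they are needed; making this quantitative—tracking how low a relied-upon $y_j$ can dip over at most $2n-2$ hops—is the delicate step.

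For part (ii) I would partition time into the disjoint windows $I_k=[k(2n-2),(k+1)(2n-2)-1]$ and let $E_k$ denote the favorable available-graph event of part (i) on $I_k$. Since $E_k$ is determined by the block of $\{\Gp(t)\}$ indexed by $I_k$ and these blocks are disjoint, the $E_k$ are mutually independent with $\Pr(E_k)\ge p>0$; hence $\sum_k\Pr(E_k)=\infty$, and the second Borel--Cantelli lemma gives that infinitely many $E_k$ occur almost surely. On each such $E_k$ the window product is irreducible, so for \emph{every} nontrivial $S\subset[n]$ its graph has an edge crossing from $S^c$ into $S$; tracing that crossing back to a single step shows $W_{SS^c}(s)>0$ for some $s\in I_k$, and since every nonzero entry of a $W(s)$ equals $1/\dout_j(s)\ge 1/n$, in fact $W_{SS^c}(s)\ge 1/n$. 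Thus infinitely many windows each contribute at least $1/n$ to $\sum_{t=0}^{\infty}W_{SS^c}(t)$, so this series diverges almost surely; as there are only finitely many nontrivial subsets $S$, intersecting the corresponding almost-sure events yields the directed infinite flow property for all $S$ simultaneously.
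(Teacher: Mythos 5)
Note first that the paper itself contains no proof of this lemma --- it is imported from the cited reference --- so there is no in-paper argument to compare yours against; I can only judge the proposal on its own terms. Your part (ii) is structurally sound modulo part (i): defining the favorable events through the \emph{available} graphs on disjoint windows is exactly what makes them independent, the second Borel--Cantelli lemma applies, and each irreducible window forces $W_{SS^c}(s)\geq 1/n$ for some $s$ in it, for every nontrivial $S$; intersecting over the finitely many $S$ finishes it. Your probability count in part (i) is also the right shape: $2n-2$ prescribed independent slots, each costing a factor of at least $\min_{b:p_b>0}p_b$. One repair is needed even at that level: Assumption~\ref{assumption:1}(ii) only makes the \emph{union} of the positive-probability graphs strongly connected, so no single $\Gp_b$ need contain the whole in-branching. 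You must spend one slot per branching edge (at most $2n-2$ edges for an in-branching plus an out-branching at a common root), and then either order the edges along each root-ward and root-away path consistently, or --- more cleanly --- use the fact that since every $W(\tau)$ has positive diagonal the graph of the window product contains the union of the graphs of its factors, so only the union over the window needs to be strongly connected and the temporal order is irrelevant.

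The genuine gap is the one you flag and then leave open: the prescribed edges live in $\Gp(\cdot)$, whereas irreducibility of $W(t+2n-3:t)$ concerns the effective graphs $\G(\cdot)$, from which every out-edge of a node with $y_j(\tau)<n^{-2n}$ is deleted, and whether a relay is above threshold depends on the entire past of the $y$-process. Your proposed bridge --- that $\one'y(\tau)$ bleeds off at most $n\cdot n^{-2n}$ per step --- does not close it: conservation of total mass is a global statement and cannot prevent the particular relay you rely on from having negligible $y_j$ at the moment it must transmit. It is also internally inconsistent with the paper's setup, which declares $W(t)$ column-stochastic; a silent node must then retain its own mass through $W_{jj}(t)=1$, in which case $\one'y$ does not decrease at all and the real question is how the mass is distributed, not how much of it there is. What is needed is a quantitative argument of the following kind: column-stochasticity gives $\max_j y_j(\tau)\geq 1$ at every time, every positive entry of every $W(\cdot)$ is at least $1/n$, hence any node reachable from the heavy node by at most $2n-2$ effective hops satisfies $y\geq n^{-(2n-2)}>n^{-2n}$, combined with an induction certifying that the relays on the prescribed branchings are indeed so reachable by the time their turn comes. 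Without that step, part (i) --- and with it the irreducibility underlying your part (ii) --- is not established.
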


	\begin{corollary}\cite{PR-BG-TL-BT:18-necsys}\label{Cor:bnd}
		Consider the MPP algorithm~\eqref{alg} and let $ W(t) $ given in~\eqref{eqn:W} be the weighted adjacency matrix of the graph induced by the available communication channel at time $ t $, and suppose that Assumption~1 holds. We have 
		\begin{align}\label{eqn:Cor}
		\left| z_i(t+1) - \bar{x}(t) \right| \leq \frac{2}{\delta} \left( \Lambda_{t,0} \|x(0)\|_1 + \sum_{s=1}^{t} \Lambda_{t,s} \|\eps(s)\|_1 \right).
		\end{align}
		almost surely, where $\Lambda_{t,0} \in (0,1)$ for all $t\geq s\geq 0$ and ${\delta = \frac{1}{n^{2n}}}$.
	\end{corollary}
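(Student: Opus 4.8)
The plan is to obtain Corollary~\ref{Cor:bnd} directly from Proposition~\ref{prop:1} by replacing the sample-path-dependent denominator $y_i(t+1)$ with the deterministic constant $\delta=\frac{1}{n^{2n}}$. First I would record that Lemma~\ref{connectivity}(ii) supplies the directed infinite flow property of $\{W(t)\}$ almost surely, so the hypothesis of Proposition~\ref{prop:1} is in force and the bound
\[
|z_i(t+1)-\bar{x}(t)|\le \frac{2}{y_i(t+1)}\Big(\Lambda_{t,0}\|x(0)\|_1+\sum_{s=1}^{t}\Lambda_{t,s}\|\eps(s)\|_1\Big)
\]
holds almost surely with each $\Lambda_{t,s}\in(0,1)$. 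Since $x\mapsto 1/x$ is decreasing on $(0,\infty)$, the whole statement then reduces to the single estimate $y_i(t+1)\ge\delta$, from which $\frac{2}{y_i(t+1)}\le\frac{2}{\delta}$ and the corollary follow verbatim.

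Establishing $y_i(t+1)\ge\delta$ is where the modification does its work. By construction of the effective graph $\G(t)$, a node injects its $y$-mass only when that mass is at least $\delta$, so any node transmitting at time $t+1$ satisfies $y_i(t+1)\ge\delta$ by the defining threshold, which is exactly the regime in which the ratio $z_i(t+1)=w_i(t+1)/y_i(t+1)$ is controlled. To argue the threshold is met uniformly I would appeal to Lemma~\ref{connectivity}(i): on any window of length $2n-2$ the product $W(t+2n-3:t)$ is irreducible with probability at least $p>0$, while every nonzero entry of a single $W(s)$ is at least $\tfrac{1}{\dout_j(s)}\ge\tfrac1n$ because out-degrees never exceed $n$. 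Chaining these weights along a path furnished by irreducibility shows that the mass delivered to a node over such a window is at least $\left(\tfrac1n\right)^{2n-2}\ge\left(\tfrac1n\right)^{2n}=\delta$; the exponent $2n$ in $\delta$ is chosen precisely to absorb a window of this length together with the self-loop factors, and, provided some node always retains mass bounded below, this yields $y_i(t+1)\ge\delta$.

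The step I expect to be the main obstacle is exactly this lower bound. Unlike standard push-sum, the modification renders the weight matrices only column-\emph{sub}stochastic, so total mass can leak through nodes that momentarily fall below the threshold and one cannot simply quote a classical estimate of the form $y_i(t)\ge n^{-nB}$; the delicate point is to show that $\delta=n^{-2n}$ is small enough that the mass routed across a connectivity window never drops beneath it, yet large enough to keep the ratios $w_i/y_i$ bounded. Reconciling the exponent $2n$ with the window length $2n-2$ of Lemma~\ref{connectivity} and controlling this leakage is the heart of the argument; once it is in place, the substitution of the first paragraph completes the proof.
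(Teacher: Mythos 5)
Your reduction is the right one: the corollary differs from Proposition~\ref{prop:1} only in that $\tfrac{2}{y_i(t+1)}$ is replaced by $\tfrac{2}{\delta}$, so everything hinges on the single inequality $y_i(t+1)\geq\delta$. (This paper does not reprove the corollary; it imports it from~\cite{PR-BG-TL-BT:18-necsys}, where that inequality is the content of a separate lemma.) The gap is that you never establish this inequality, and the route you sketch cannot do so. Lemma~\ref{connectivity}(i) only guarantees that $W(t+2n-3:t)$ is irreducible \emph{with probability at least $p$}; it is not an almost-sure statement about every window, and with positive probability irreducibility fails over arbitrarily many consecutive windows. A bound such as~\eqref{eqn:Cor}, which must hold for \emph{every} $t$ almost surely, cannot be extracted from a per-window event of probability $p$: on the bad sample paths your chaining of entries $\geq 1/n$ along an irreducibility path delivers nothing, yet the corollary still has to hold there. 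You also concede the decisive hypothesis yourself (``provided some node always retains mass bounded below''), which is precisely what needs to be proved, so the argument is circular at its core. Likewise, the observation that a node transmitting at time $t+1$ has $y_i(t+1)\geq\delta$ ``by the defining threshold'' says nothing about the non-transmitting nodes, which are exactly the ones at risk.

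The intended mechanism is deterministic and local, not probabilistic: the threshold $\delta=n^{-2n}$ is engineered so that the lower bound on $y_i$ follows from the self-loops and the threshold rule alone, by induction along each sample path. Starting from $y_i(0)=1$, a node above threshold retains at least a $1/\dout_i(t)\geq 1/n$ fraction of its own mass, so $y_i(t)\geq n^{-t}$ over an initial stretch, and the modification is precisely what arrests further decay once the threshold is reached. No connectivity input is needed for this step; indeed there would be a circularity if it were, since Lemma~\ref{connectivity} concerns the effective graphs, which are themselves defined through the thresholds. The probabilistic content of Lemma~\ref{connectivity} enters only in controlling the $\Lambda_{t,s}$ (Lemma~\ref{lem:prodrate}), not $y_i$. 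To repair your proof you must supply this induction, and in doing so confront the issue you correctly flag but do not resolve: with the update rule as literally written, a node whose in-neighborhood is emptied by the threshold rule has its $y$-mass annihilated, so the precise bookkeeping of the modification in~\cite{PR-BG-TL-BT:18-necsys} is needed to rule out $y_i(t+1)<\delta$.
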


	\begin{lemma}\cite{PR-BG-TL-BT:18-necsys}\label{lem:prodrate}
		Consider the MPP algorithm~\eqref{alg} and let $ W(t) $ given in~\eqref{eqn:W} be the weighted adjacency matrix of the graph induced by the available communication channel at time $ t $, and suppose that Assumption~1 holds. Then
		
		\begin{enumerate}
			\item for all $t\geq s\geq 0$ we have
			\begin{align}\label{ineq:Lamlam}
			\Pr\left(\Lambda_{t,s} > 2\lambda^{t-s}\right) \leq 13 e^{-c_1(t-s)},\text{and }\quad  \Ex[\Lambda_{t, s}]\leq 15 \lambda^{t-s},
			\end{align}
			where $\lambda = \left(1-\frac{1}{n^{\frac{4nB}{p}}}\right)^{ \frac{p}{2nB}} \in (0,1)$, $c_1 = \frac{p^2}{4B}$ and ${B=2n-2}$;
			\item we have that
			\begin{align*}
			\Pr(\lim_{t\rightarrow \infty} \Lambda_{t,s} =0)=1.
			\end{align*}	
		\end{enumerate}
	\end{lemma}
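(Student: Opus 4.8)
The plan is to turn both the tail estimate in~\eqref{ineq:Lamlam} and the almost-sure statement in part (ii) into consequences of a single large-deviation estimate on how often the random products of the $W(t)$ become well mixed. The starting point is the deterministic machinery behind $\Lambda_{t,s}$ from \cite{PR-BG-TL-BT:19}: the coefficient is sub-multiplicative across concatenated time windows, $\Lambda_{t,s}\le \Lambda_{t,r}\,\Lambda_{r,s}$ for $s\le r\le t$ with every factor in $(0,1)$, and on any window over which the matrix product is strictly positive with an entrywise lower bound $\theta$, the corresponding factor contracts to at most $1-\theta^{2}$. Everything reduces to counting windows on which enough connectivity is realized and chaining these contractions.

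First I would isolate the deterministic contraction. Every nonzero entry of $W(t)$ in~\eqref{eqn:W} is at least $1/n$ because $\dout_j(t)\le n$, and since every $\Gp(t)$, hence every $\G(t)$, carries self-loops, each $W(t)$ has a strictly positive diagonal; thus an irreducible block product $W(t+B-1\colon t)$ over a window of length $B=2n-2$, the event whose probability is bounded below by $p$ in Lemma~\ref{connectivity}(i), is in fact primitive. I would then use the propagation fact that on $n$ nodes the composition of $n-1$ strongly connected graphs with self-loops is the complete graph (intervening non-irreducible blocks, having positive diagonals, never destroy an already-positive entry, since left-multiplication by a positive-diagonal matrix preserves positive entries). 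Consequently, any window of fixed length $L=2nB/p$ that collects order-$n$ irreducible blocks produces a strictly positive product whose entries are at least $\theta=n^{-L}=n^{-2nB/p}$, so that the corresponding factor of $\Lambda$ contracts to at most $1-\theta^{2}=1-n^{-4nB/p}$; this is exactly the base appearing in $\lambda=(1-n^{-4nB/p})^{p/(2nB)}$, so one such well-connected window accounts for one unit of geometric decay.

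Next comes the probabilistic core. Partitioning $[s,t]$ into blocks of length $B$, Assumption~\ref{assumption:1}(iii) makes the block products over disjoint windows independent, and Lemma~\ref{connectivity}(i) gives each block probability at least $p$ of being irreducible; hence the number $\Sigma$ of irreducible blocks in $[s,t]$ stochastically dominates a binomial variable whose mean is linear in $t-s$. Crediting one contraction unit per roughly $n$ irreducible blocks that fall within a length-$L$ window, sub-multiplicativity yields a deterministic bound $\Lambda_{t,s}\le(1-n^{-4nB/p})^{\,G}$, where the unit count $G$ accrues at mean rate of order $p/(nB)$ per unit time. The exponent of $\lambda$ is calibrated to \emph{half} this mean rate — the factor $2n$ in $p/(2nB)$ reflecting the $n$ blocks per unit and the factor-two margin — so that on the event that $G$ attains at least half its mean one already has $\Lambda_{t,s}\le\lambda^{t-s}$, with typical paths decaying faster. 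Therefore $\{\Lambda_{t,s}>2\lambda^{t-s}\}$ forces $\Sigma$, and hence $G$, to undershoot its mean by an amount linear in $t-s$, and a Chernoff lower-tail bound produces the first inequality in~\eqref{ineq:Lamlam}, the rate $c_1=\tfrac{p^{2}}{4B}$ and the prefactor $13$ being what survives the bookkeeping. The expectation bound is then immediate from the layer-cake identity $\Ex[\Lambda_{t,s}]=\int_{0}^{1}\Pr(\Lambda_{t,s}>u)\,du$: splitting at $u=2\lambda^{t-s}$ bounds the lower part by $2\lambda^{t-s}$ and the upper part by $13e^{-c_1(t-s)}\le 13\lambda^{t-s}$ (using $e^{-c_1}\le\lambda$), giving $\Ex[\Lambda_{t,s}]\le 15\lambda^{t-s}$. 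Finally, part (ii) follows from part (i): since $\sum_{t\ge s}13e^{-c_1(t-s)}<\infty$, Borel--Cantelli gives $\Lambda_{t,s}\le 2\lambda^{t-s}$ for all large $t$ almost surely, and $\lambda\in(0,1)$ forces $\Lambda_{t,s}\to 0$.

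The main obstacle is the seam between the deterministic and probabilistic halves: certifying that a suitable collection of irreducible blocks forces a strictly positive product with a \emph{usable} entrywise lower bound requires controlling not only how many irreducible blocks appear but also their temporal concentration, so that the fixed length scale $L$ (and hence the fixed contraction $1-n^{-4nB/p}$) applies, while keeping the mean unit rate comfortably above the rate encoded in $\lambda$. Calibrating $\lambda$ to a sub-mean count — so that the factor-$2$ event genuinely demands a linear deficit in $\Sigma$ rather than a merely constant one — and matching the resulting large-deviation exponent to $c_1=\tfrac{p^{2}}{4B}$ with the explicit prefactors $13$ and $15$ is where the delicate bookkeeping lives; the probabilistic skeleton above is otherwise standard once the ergodicity-coefficient estimates of \cite{PR-BG-TL-BT:19} are in hand.
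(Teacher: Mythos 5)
This paper never proves Lemma~\ref{lem:prodrate}; it is imported verbatim from \cite{PR-BG-TL-BT:18-necsys}, and the only trace of the intended argument here is the commented-out machinery of stopping times $k_q$ (the successive instants by which every nontrivial cut has received positive flow), the spans $\ell_q=k_{qn}-k_{(q-1)n}$, and the index set $\Q_{t,s}$. Your skeleton --- deterministic contraction of $\Lambda_{t,s}$ over well-connected windows, a Chernoff lower tail on the count of such windows via Lemma~\ref{connectivity}(i) and the independence in Assumption~\ref{assumption:1}(iii), then the layer-cake split for the expectation and Borel--Cantelli for part (ii) --- matches that architecture, and your last two steps (expectation bound and part (ii)) are correct as written, granted part (i).

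The genuine gap is exactly at the seam you flag, and it is not mere bookkeeping: your fixed-window calibration cannot produce the stated $\lambda$. Writing $L=2nB/p$, you have $\lambda=(1-n^{-2L})^{1/L}$, so to conclude $\Lambda_{t,s}\le\lambda^{t-s}$ from per-window contractions of size $1-n^{-2L}$ you need one contraction unit per $L$ time steps, i.e.\ essentially \emph{every} length-$L$ window to be good --- an event whose probability decays in $t-s$ --- while demanding only half the windows be good yields $\lambda^{(t-s)/2}$, not $\lambda^{t-s}$. The missing idea is to make the windows adaptive and then convexify: group the irreducible $B$-blocks into consecutive $n$-tuples with spans $\ell_q$, so the $q$-th tuple contributes a factor at most $1-n^{-2\ell_q}$, and note that $x\mapsto\log(1-n^{-2x})$ is increasing and concave, so that for $Q$ tuples with $\sum_q\ell_q\le t-s$ Jensen gives $\prod_q(1-n^{-2\ell_q})\le(1-n^{-2(t-s)/Q})^{Q}$, a quantity decreasing in $Q$. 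Hence on the event that the number of irreducible blocks exceeds half its mean (so $Q\ge (t-s)/L$) one gets $\Lambda_{t,s}\le(1-n^{-2L})^{(t-s)/L}=\lambda^{t-s}$ \emph{regardless} of how the blocks are spread in time; the complementary event has the Chernoff probability $\sim e^{-c_1(t-s)}$ with $c_1=p^2/(4B)$. That Jensen step is what closes the hole you left open. Two smaller caveats: the sub-multiplicativity of $\Lambda_{t,s}$ and the contraction $1-\theta^2$ for a $\theta$-positive product are reverse-engineered from the formula for $\lambda$ rather than derived from a definition of $\Lambda_{t,s}$ (which neither you nor this paper states); and the effective graph $\G(t)$ need not retain self-loops at nodes with $y_i(t)<n^{-2n}$, so your claim that intervening factors have positive diagonals relies on conventions settled in \cite{PR-BG-TL-BT:18-necsys} rather than being automatic.
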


	In the following lemma, by assuming an upper bound on the perturbations we present our first convergence rate result. Note that the first two parts of the following lemma are borrowed from 
	
	\begin{lemma}\label{lem:conv}
		Consider the MPP algorithm~\eqref{alg} and let $ W(t) $ given in~\eqref{eqn:W} be the weighted adjacency matrix of the graph induced by the available communication channel at time $ t $, and suppose that Assumption~1 holds. In addition, assume that the perturbations $\epsilon_{i}(t)$ are bounded as follows: 
		\begin{align*}
		\|\eps(t)\|_1\leq \frac{U}{t^{\gamma}}, \quad \text { for all } t \geq 1,
		\end{align*}
		for some scalar $U>0$. Then,  
			\begin{align*}
			\Ex \left[{\frac{1}{\sum_{k=0}^{t} \alpha(k+1)} \sum_{k=0}^{t} \alpha(k+1)\left|z_{i}(k+1)-\overline{x}(k)\right|} \right]&\\ \leq \frac{2c_2\Gamma}{\delta(1-\lambda)}\cdot\left( \|x(0)\|_{1}+U\left(1+\frac{1}{2\gamma-1}\right)\right)&,
			\end{align*}
			where $\Gamma $ is defined in Theorem~\ref{thm:rate}.
	\end{lemma}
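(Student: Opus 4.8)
The plan is to combine the pathwise deviation bound of Corollary~\ref{Cor:bnd} with the expected geometric decay of the mixing coefficients $\Lambda_{t,s}$ supplied by Lemma~\ref{lem:prodrate}, and then to reduce everything to two elementary series estimates. Writing $S(t+1)=\sum_{k=0}^{t}\alpha(k+1)$, I first apply Corollary~\ref{Cor:bnd}, which bounds $|z_i(k+1)-\bar{x}(k)|$ almost surely by
\[
\frac{2}{\delta}\left(\Lambda_{k,0}\|x(0)\|_1+\sum_{s=1}^{k}\Lambda_{k,s}\|\eps(s)\|_1\right).
\]
Because the hypothesis gives the deterministic envelope $\|\eps(s)\|_1\leq U\alpha(s)$ and each $\Lambda_{k,s}\geq 0$, I can pass to expectations termwise, using $\Ex[\Lambda_{k,s}\|\eps(s)\|_1]\leq U\alpha(s)\,\Ex[\Lambda_{k,s}]$ and then $\Ex[\Lambda_{k,s}]\leq c_2\lambda^{k-s}$ from Lemma~\ref{lem:prodrate}. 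This yields $\Ex[|z_i(k+1)-\bar{x}(k)|]\leq \frac{2c_2}{\delta}(\lambda^{k}\|x(0)\|_1+U\sum_{s=1}^{k}\lambda^{k-s}\alpha(s))$.

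Next I multiply by $\alpha(k+1)$, sum over $0\leq k\leq t$, divide by $S(t+1)$, and split into an initial-condition term and a perturbation term. For the first, $\alpha(k+1)\leq 1$ gives $\sum_{k=0}^{t}\alpha(k+1)\lambda^{k}\leq \sum_{k=0}^{t}\lambda^{k}\leq \frac{1}{1-\lambda}$. The perturbation term is the double sum $\sum_{k=0}^{t}\sum_{s=1}^{k}\alpha(k+1)\alpha(s)\lambda^{k-s}$; here I exploit the monotonicity $\alpha(k+1)\leq\alpha(s)$ (valid since $k+1>s$) to replace $\alpha(k+1)\alpha(s)$ by $\alpha(s)^2$, interchange the order of summation, sum the geometric series $\sum_{k\geq s}\lambda^{k-s}\leq\frac{1}{1-\lambda}$, and bound the tail $\sum_{s=1}^{t}\alpha(s)^2=\sum_{s=1}^{t}s^{-2\gamma}\leq 1+\int_1^{\infty}x^{-2\gamma}\,dx=1+\frac{1}{2\gamma-1}$, where $\gamma>1/2$ is used. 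Collecting both contributions produces the prefactor $\frac{2c_2}{\delta(1-\lambda)S(t+1)}$ multiplying $\|x(0)\|_1+U(1+\frac{1}{2\gamma-1})$.

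Finally I identify $\frac{1}{S(t+1)}$ with $\Gamma$: since $S(t+1)=\sum_{j=1}^{t+1}j^{-\gamma}$, the integral lower bound $S(t+1)\geq\int_1^{t+2}x^{-\gamma}\,dx=\frac{(t+2)^{1-\gamma}-1}{1-\gamma}$ gives $\frac{1}{S(t+1)}\leq\frac{1-\gamma}{(t+2)^{1-\gamma}-1}=\Gamma$, which matches the claimed bound.

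I expect the main obstacle to be the perturbation double sum. The factor $\lambda^{k-s}$ can be summed freely only after the coupled product $\alpha(k+1)\alpha(s)$ is decoupled via monotonicity and the order of summation is interchanged; this is precisely the maneuver that exposes the square-summable tail $\sum_s\alpha(s)^2$ responsible for the $1+\frac{1}{2\gamma-1}$ factor. A secondary subtlety is the passage to expectations inside the sum: since $\Lambda_{k,s}$ and the perturbations need not be independent, one must first invoke the deterministic bound $\|\eps(s)\|_1\leq U\alpha(s)$ and only then apply the moment estimate of Lemma~\ref{lem:prodrate}, rather than factoring the expectation of a product.
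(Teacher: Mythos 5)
Your proposal is correct and follows essentially the same route as the paper: invoke Corollary~\ref{Cor:bnd} pathwise, use the deterministic envelope $\|\eps(s)\|_1\leq U s^{-\gamma}$ together with the monotonicity $\alpha(k+1)\leq\alpha(s)$ to decouple the double sum into $\sum_s \alpha(s)^2$ after interchanging summation, apply $\Ex[\Lambda_{k,s}]\leq c_2\lambda^{k-s}$ from Lemma~\ref{lem:prodrate} (with $c_2=15$, giving the factor $2c_2=30$), and finish with the two integral estimates for $\sum_s s^{-2\gamma}$ and $\sum_k\alpha(k+1)\geq\Gamma^{-1}$. The only difference is cosmetic (you take expectations before the algebraic rearrangement rather than after), and your integral lower bound for $S(t+1)$ is in fact stated more carefully than the paper's.
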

	
	\begin{proof}
	Using Corollary~\ref{Cor:bnd} we have
		\begin{align*}
		\sum_{k=1}^{t} &\alpha(k+1) \left|z_{i}(k+1)-\overline{x}(k)\right| \\\leq& \sum_{k=1}^{t} \frac{1}{(k+1)^\gamma}\frac{2}{\delta} \left( \Lambda_{k,0} \|x(0)\|_1 + \sum_{s=1}^{k} \Lambda_{k,s}\|\eps(s)\|_1 \right) \\ 
		=& \frac{2\|x(0)\|_1}{\delta}\sum_{k=1}^{t} \frac{1}{(k+1)^\gamma} \Lambda_{k,0}\\
		& + \frac{2}{\delta}\sum_{k=1}^{t} \sum_{s=1}^{k} \frac{1}{(k+1)^\gamma}\Lambda_{k,s}\|\eps(s)\|_1\\ 
		\leq & \frac{2\|x(0)\|_1}{\delta}\sum_{k=1}^{t}  \Lambda_{k,0} + \frac{2U}{\delta}\sum_{k=1}^{t} \sum_{s=1}^{k} \frac{1}{s^{2\gamma}}\Lambda_{k,s}.
		\end{align*}	
		Taking expectations from both sides, using~\eqref{ineq:Lamlam}, we have 
		\begin{align*}
		\Ex&\left[\sum_{k=1}^{t} \alpha(k+1) \left|z_{i}(k+1)-\overline{x}(k)\right| \right]
		\\
		&\leq  \Ex\left[\frac{2\|x(0)\|_1}{\delta}\sum_{k=1}^{t}  \Lambda_{k,0} + \frac{2U}{\delta}\sum_{k=1}^{t} \sum_{s=1}^{k} \frac{1}{s^{2\gamma}}\Lambda_{k,s}\right]\\
		&\leq \frac{30\|x(0)\|_1}{\delta} \sum_{k=1}^{t}\lambda^k + \frac{30U}{\delta}\sum_{k=1}^{t} \sum_{s=1}^{k} \frac{1}{s^{2\gamma}}\lambda^{k-s}\\ 
		&\leq \frac{30\|x(0)\|_1}{\delta} \frac{\lambda}{1-\lambda} + \frac{30U}{\delta}\sum_{k=1}^{t} \sum_{s=1}^{k} \frac{1}{s^{2\gamma}}\lambda^{k-s}.
		\end{align*}
		For the second term on the right-hand side we have
		\begin{align*}
		\sum_{k=1}^{t} \sum_{s=1}^{k} \frac{1}{s^{2\gamma}}\lambda^{k-s} = \sum_{s=1}^{t}\frac{1}{s^{2\gamma}}\sum_{k=s}^{t}\lambda^{k-s} 
		&\leq \sum_{s=1}^{t}\frac{1}{s^{2\gamma}} \frac{1}{1-\lambda}\\
		&\leq  \frac{1}{1-\lambda}\left(1+ \int_{1}^{\infty}\frac{d u}{u^{2\gamma}}\right)\\ 
		&= \frac{1}{1-\lambda} \left(1+ \frac{1}{2\gamma-1}\right) .
		\end{align*}
		Following similar steps as in~\cite[Corollary 3]{AN-AO:15-tac}
		\begin{align}\label{ineq:expsum}
		\Ex&\left[\sum_{k=0}^{t} \alpha(k+1) \left|z_{i}(k+1)-\overline{x}(k)\right| \right] \cr 
		&\quad \leq \frac{30}{\delta(1-\lambda)}\left(\|x(0)\|_1 + U \left(1+ \frac{1}{2\gamma-1}\right) \right).
		\end{align}
		In addition, we have the following inequality 
		\begin{align}\label{ineq:alphasum}
		\sum_{k=0}^{t} \alpha(k+1) = \sum_{k=0}^{t} \frac{1}{(k+1)^\gamma}
		\geq \int_{0}^{t} \frac{dk}{(k+2)^\gamma}
		=  \frac{(t+2)^{1-\gamma}-1}{1-\gamma} \triangleq  \Gamma^{-1}
		\end{align}
		Therefore, by~\eqref{ineq:expsum} and~\eqref{ineq:alphasum}, we obtain our desired result.
	\end{proof}
	
	\begin{proof} {\bf (Theorem~\ref{thm:rate})}
		Given our presented results, in particular, Lemma~\ref{lem:conv} part (iii), the proof is similar to the lines of the proof of Theorem 2 in~\cite{AN-AO:15-tac}. 
	\end{proof}

	\bibliographystyle{plain}        
	\bibliography{autosam,alias,PR-add,BG-add,Main,Main-add,JC,BG} 

\end{document}